\newcommand{\cG}{\ensuremath{\mathcal G}}
\newcommand{\cP}{\ensuremath{\mathcal P}}
\newcommand{\cS}{\ensuremath{\mathcal S}}
\newcommand{\NN}{\ensuremath{\mathbb N}}
\renewcommand{\phi}{\varphi}
\renewcommand{\rho}{\varrho}
\let\setminus=\smallsetminus
\let\emptyset=\varnothing
\newcommand\redsout{\bgroup\markoverwith{\textcolor{red}{\rule[0.5ex]{2pt}{0.5pt}}}\ULon}
\declaretheorem[parent=section]{theorem}
\declaretheorem[sibling=theorem]{lemma}
\declaretheorem[sibling=theorem]{proposition}
\declaretheorem[sibling=theorem]{problem}
\setlist{itemsep=0.1em, topsep=0.1em, parsep=0.1em, partopsep=0.1em}
\colorlet{RoyalRed}{red!70!black}
\definecolor{RoyalBlue}{rgb}{0.25, 0.41, 0.88}
\definecolor{RoyalAzure}{rgb}{0.0, 0.22, 0.66}
\newlength{\bibitemsep}\setlength{\bibitemsep}{0.5pt}
\newlength{\bibparskip}\setlength{\bibparskip}{0.5pt}
\let\oldthebibliography\thebibliography
\renewcommand\thebibliography[1]{%
  \oldthebibliography{#1}%
  \setlength{\parskip}{\bibitemsep}%
  \setlength{\itemsep}{\bibparskip}%
}
\def\cleq{\preccurlyeq}
\title{Smaller universal posets}
\newcommand\emailfont{\sffamily}
\newcommand*{\email}[1]{\href{mailto:#1}{\emailfont#1} } 
\author[1]{Paul Bastide\thanks{Research supported by ERC Advanced Grant 883810.}}
\author[2]{Carla Groenland\thanks{Research supported by the Dutch Research Council (NWO, VI.Veni.232.073).}}
\author[3]{Rajko Nenadov\thanks{Research supported by the New Zealand Marsden Fund 23-UOA-117.}}
\affil[1]{%
Mathematical Institute, University of Oxford, UK. Email: \email{paul.bastide@ens-rennes.fr}}
\affil[2]{%
Delft Institute of Applied Mathematics, TU Delft, the Netherlands. Email: 
\email{c.e.groenland@tudelft.nl}
}
\affil[3]{%
School of Mathematics and Statistics, University of Canterbury, New Zealand. Email: \email{rajko.nenadov@canterbury.ac.nz}
}
\date{\today}
\begin{document}

\maketitle

\begin{abstract}
    We show that there is a constant $C>0$ such that for each integer $n\geq 1$, there is a poset on at most $2^{2n/3+C\sqrt{n}}$ elements that contains each $n$-element poset as an (induced) subposet. 
\end{abstract}

\section{Introduction}
A graph $U$ is said to be \textit{universal} for a family of graphs $\cG$ if for any graph $G \in \cG$, $U$ contains\footnote{In this work \emph{contains} will always refers to the \emph{induced} containment relation.} $G$. Moreover if $U \in \cG$, then we call $U$ \textit{faithful}. This concept emerged in the 1950s~\cite{fraisse1953theory,johnston1956universal,rado1964universal,moon65}, and has been extensively studied since then~\cite{komjath1984universal,diestel1985universal,komjath1988some,furedi1997nonexistence,furedi1997existence,komjath1999some,cherlin1999universal,cherlin2001forbidden,cherlin2007universalforbidden,cherlin2007universal,cherlin2016universal,huynh2021universality,bastide2025faithful}. A well-known example of this notion is the \emph{Rado graph}. Ackermann~\cite{ackermann1937widerspruchsfreiheit}, Erd\H{o}s and Rényi~\cite{erdos1963asymmetric} and Rado~\cite{rado1964universal} independently proved that the Rado graph is universal for the class of all countable graphs. The Rado graph has, since then, proven to be useful in solving numerous other questions in combinatorics (see the following surveys~\cite{cameron1997random,cameron2001random}). Universal structures are also of interest to the computer science community because of their close connection to efficient data structures~\cite{fraigniaud2010optimal,alstrup2015adjacency,abrahamsen.alstrup.ea:near-optimal,MunroNicholson,alstrup2017optimal,ACKRRS,EJM23}. 

Similarly, a poset $U$ is said to be universal for a family of posets $\cP$ if $U$ contains all posets $P\in \cP$.
This notion is arguably even older than universality for graphs. Indeed, the existence of a countable universal poset containing all countable posets has been proven multiple times~\cite{fraisse1953theory,jonsson1956universal,johnston1956universal} and, as written in~\cite{hubivcka2005universal}, in the context of category theory, ``\emph{motivated the whole research area}''.

For finite posets, similar questions have been asked. This paper studies the following natural problem which was highlighted in the conclusion of~\cite{BonamyEsperetGroenlandScott21}, and originates from a question posed by Hamkins~\cite{MOquestion}.
\begin{problem}
\label{pb:uni_min_poset}
    What is the minimum order of a poset that contains every $n$-element poset? 
\end{problem}

Posets have also been studied through the graph they induce, called comparability graphs\footnote{See \cref{sec:prelim} for a formal definition.}. 
For any pair of posets $P$ and $U$ where $P$ is contained in $U$, the comparability graph of $P$ will be an induced subgraph of the comparability graph of $U$. This means that the answer to \cref{pb:uni_min_poset} is at least the answer to the following problem.
\begin{problem}
    \label{pb:uni_min_comparability}
    What is the minimum order of a comparability graph which contains every $n$-vertex comparability graph as an induced subgraph?
\end{problem}

The Boolean lattice $Q_n=(2^{[n]},\subseteq)$ on $2^n$ elements, consisting of the subsets of $\{1,\dots,n\}$ as elements and set inclusion $\subseteq$ as partial order, is well-known to contain all posets on $n$ elements (see \cref{prop:hypercube_univ}), and therefore gives an upper bound of $2^n$ to both \cref{pb:uni_min_poset} and \cref{pb:uni_min_comparability}. Hamkins~\cite{MOquestion} asked whether any substantial improvement can be made on this upper bound. Regarding lower bounds, the size of a universal poset $U$ is at least $2^{(1+o(1)) n/4}$, which can be derived from the following simple information-theoretic argument. Consider two antichains of size $n/2$, one containing minimal elements and the other maximal elements, and any subset of the $(n/2)^2$ possible order relations from a minimum to a maximum between the two antichains. From this, we deduce that there are at least $2^{n^2/4}/n!$ posets on $n$ elements.  A universal poset $U$ must then satisfy $\binom{|U|}{n} \geq 2^{(1+o(1))n^2/4}$, which implies the lower bound $|U|\geq 2^{(1+o(1))n/4}$. We remark that the lower bound on the number of posets above is asymptotically correct, since Kleitman and Rothschild~\cite{KleitmanRothschild75} proved that there are $2^{n^2/4 +3n/2+O(\log n)}$ posets on $n$ elements.

The lower bound $2^{(1+o(1))n/4}$ and upper bound $2^{(1+o(1))n}$ were the best bounds known for \cref{pb:uni_min_poset} and \cref{pb:uni_min_comparability} before our work.  
We improve on the upper bound by 
showing that an exponentially small fraction of the Boolean lattice already embeds all posets on $n$ elements. 
\begin{theorem}
    \label{thm:uni_poset_2/3_hypercube}
    There is a constant $C>0$ such that for any $n \in \NN$, there is a subposet $P_n$ of $(2^{[n]},\subseteq)$ on at most $2^{\frac23 n + C\sqrt{n}}$ elements that contains all $n$-element posets. 
\end{theorem}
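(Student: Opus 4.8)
The plan is to realise $P_n$ as the image of an explicit encoding scheme $f\colon P\to 2^{[n]}$, applied uniformly to all $n$-element posets, whose total set of possible images is small: I want a family $\cF\subseteq 2^{[n]}$ with $|\cF|\le 2^{2n/3+C\sqrt n}$ together with, for every poset $P$ on $[n]$, an order-embedding $P\hookrightarrow(\cF,\subseteq)$. It is tempting to ask merely that every $n$-poset embed into a Boolean lattice $2^{[m]}$ with $m=\tfrac23 n+O(\sqrt n)$ and take $\cF=2^{[m]}$, but this cannot work: an $n$-element chain needs $n$ strictly nested sets, forcing $m\ge n-1$. So $\cF$ must not be a subcube; it has to contain long chains (for tall posets) while having few sets overall (hence only short antichains, which is harmless since posets need antichains of size at most $n\ll 2^{2n/3}$). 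The starting point is the down-set embedding behind \cref{prop:hypercube_univ}: fixing a linear extension and relabelling, the map $x\mapsto D(x):=\{z:z\le x\}$ is an order-embedding into $2^{[n]}$, but it uses all $n$ coordinates. The whole game is to reorganise and compress these images into a bounded family.

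First I would decompose $P$ into its height levels $L_1,\dots,L_h$ (each an antichain, by Mirsky's theorem) and encode each element in blocks of coordinates indexed by the levels. The cheap part is \emph{within-level} separation: two elements of the same antichain $L_i$ are incomparable and must get incomparable images, but an antichain of size $n_i$ can be separated by a code of length only $O(\log n_i)$ whose characteristic vectors form an antichain in $\{0,1\}^{O(\log n_i)}$, and such codes extend to genuine up-sets $\uparrow S$ of $P$. Thus wide levels cost almost nothing, and the entire difficulty is encoding the comparabilities \emph{between} levels, i.e.\ the essentially arbitrary bipartite relations across consecutive antichains.

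The idea I would use for the $2/3$ constant is a meet-in-the-middle between two dual families of separating up-sets. A non-relation $x\not\le y$ can be witnessed either by a \emph{down-feature} keyed below $x$ (an up-set $\uparrow S$ with $x\in S$ and $S\cap\downarrow y=\emptyset$) or by an \emph{up-feature} keyed above $y$ (the up-set $P\setminus\downarrow y$, which contains exactly the elements not below $y$ and so separates $(x,y)$ for \emph{all} $x\not\le y$ at once). Down-features are economical for elements low in $P$ and up-features for elements high in $P$; neither family alone beats the trivial $n$ coordinates, but splitting the levels at a well-chosen threshold and using down-features below it and up-features above it should let each element carry a private, identity-type coordinate on only one side of the cut, the remaining relations being handled by the cheap antichain codes. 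Optimising the threshold against the level-size profile is what I expect to produce the exponent $\tfrac23 n$, rather than the $\tfrac34 n$ one gets from suppressing only the top level or the $\tfrac12 n$ available in the purely bipartite case. The $C\sqrt n$ slack is budgeted for discretising the profile $(n_1,\dots,n_h)$ into a bounded number of ``types'' (rounding the $n_i$ to multiples of $\sqrt n$ and grouping the $h$ levels into $O(\sqrt n)$ bands) and for the auxiliary codes, each contributing only a $2^{O(\sqrt n)}$ factor to $|\cF|$.

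The genuine obstacle, I expect, is precisely this between-level encoding: a single fixed family $\cF$ must simultaneously accommodate the arbitrary bipartite relation of every consecutive pair of antichains, across all level-size profiles at once. Making the meet-in-the-middle count come out right — that, summed over all levels, the private coordinates total $\tfrac23 n+O(\sqrt n)$ while the antichain codes and the ``which side of the cut'' bookkeeping stay within the $2^{O(\sqrt n)}$ budget, and that the down- and up-features never introduce spurious comparabilities across the cut — is where the real work lies. A secondary point to verify is that the resulting poset, though defined abstractly, embeds as a subposet of $2^{[n]}$ itself and not merely of some larger cube; I would arrange this by presenting the encoding directly as a map into $2^{[n]}$ rather than composing with a further down-set embedding.
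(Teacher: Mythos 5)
Your write-up is a plan rather than a proof: the step you yourself identify as ``where the real work lies'' --- constructing the between-level encoding and verifying that the private coordinates total $\tfrac23 n+O(\sqrt n)$ --- is never carried out. Concretely, the family $\cF$ is never defined, and no calculation is given showing that the ``meet-in-the-middle'' over Mirsky levels yields exponent $2/3$. I am in fact sceptical that it does as described: for a poset with $h$ consecutive levels, the relation between level $i$ and \emph{every} lower level $j<i$ (not just $j=i-1$) is essentially arbitrary beyond the transitive closure, so an element adjacent to your cut must have its image determine an arbitrary subset of everything on the far side of the cut. For four levels of size $n/4$ this already seems to force $2^{3n/4}$ images for the elements straddling the cut under the scheme as stated, and nothing in the proposal explains how to do better. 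The within-level antichain codes and the $2^{O(\sqrt n)}$ bookkeeping are fine, but they are the easy part; without an explicit family and a count, the theorem is not proved.

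For comparison, the paper's route avoids levels entirely and is a dichotomy on the \emph{width} $w$ of the poset. If $w\le n/3$, Dilworth's theorem (\cref{dilworth}) gives a decomposition into at most $n/3$ chains, and one takes the family of all sets meeting each chain-block in a prefix; for a fixed chain-length profile this family has size $\prod_i(c_i+1)\le(n/a+1)^a=4^{n/3}$, and summing over the at most $p(n)=2^{O(\sqrt n)}$ profiles (\cref{thm:partition}) gives \cref{lem:hypercube_chain}. If instead $P$ has an antichain of size $a\ge n/3$, that single antichain is compressed into $\ell=(1+o(1))\log_2 a$ coordinates via distinct $\lfloor\ell/2\rfloor$-subsets (your ``antichain code'', but applied once, to a maximum antichain, not to every level), embedding $P$ into $2^{[n-a+\ell]}$ of size $2^{2n/3+O(\log n)}$ (\cref{lem:hypercube_antichain}). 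The two families are simply united, and $a=n/3$ is exactly where $4^{a}\cdot$ and $2^{n-a}$ balance --- that is where $2/3$ actually comes from. If you want to salvage your approach, you would need to either supply the missing between-level construction with a verified count, or notice that the width dichotomy lets you replace the entire level machinery with one application of Dilworth.
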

In particular, there is an induced subgraph $U_n$ of the hypercube comparability graph which contains all comparability graphs on $n$ vertices as an induced subgraph with $|V(U_n)| \leq 2^{\frac23 n + C\sqrt{n}}$.

\paragraph{Further related work}
A relaxation of \cref{pb:uni_min_comparability}, where the universal object representing all posets does not need to be a poset but can be any kind of data structure, has been the focus of considerable research. Munro and Nicholson~\cite{MunroNicholson} designed a succinct data structure using only \(n^2/4 + o(n)\) bits for this problem, which supports precedence queries in constant time. 
Dul\k{e}ba, Gawrychowski and Janczewski~\cite{DulebaGawrychowskiJanczewski} provided a data structure, called \emph{comparability labelling scheme}, which assigns short labels to each element of the posets such that, given the labels of two elements, and no other information regarding the poset, it is possible to decide whether these two elements are comparable. In~\cite{DulebaGawrychowskiJanczewski}, the authors showed that $(1+o(1))n/3$ bits of label per element is sufficient. This was improved to $(1+o(1))n/4$ by Bonamy, Esperet, Groenland and Scott~\cite{BonamyEsperetGroenlandScott21}, which is asymptotically optimal. This last results imply the existence of a universal graph on $2^{(1+o(1))n/4}$ vertices that contains all comparability graphs as induced subgraphs. However, as also mentioned in~\cite{BonamyEsperetGroenlandScott21}, these universal graphs do not need to be (and will not be) comparability graphs themselves so do not yield a universal poset.

The minimum integer $k$ such that a poset $P$ can be embedded in $Q_k$ is an extensively studied poset parameter, introduced by Novak in 1969 \cite{novak1969well}, and known as the $2$-dimension of $P$. For more information on this parameter we refer the reader to Trotter's book \cite{trotter2002combinatorics}.

The strength of the hypercube as a universal structure has also been studied in terms of the minor relation: Benjamini, Kalifa and Tzalik~\cite{benjamini2025hypercubeminoruniversality} showed that the $d$-dimensional hypercube $\{0,1\}^d$ 
contains all graphs with $O(2^d/d)$ edges and no isolated vertices as a minor, and this bound was shown to be best possible by Hogan, Michel, Scott, Tamitegama, Tan and Tsarev~\cite{hogan2025tightboundshypercubeminoruniversality}.

In~\cite{fraigniaud2010optimal}, Fraigniaud and Korman studied data structures called \emph{ancestry labeling schemes}, which aim to assign a label to every vertex in a tree such that, for any two vertices $u$ and $v$, it is possible to determine whether $u$ is an ancestor of $v$ by inspecting only the labels of $u$ and $v$. They used this labeling scheme, to design small universal posets for posets of bounded \emph{tree-dimension} (see~\cite{fraigniaud2010optimal} for formal definitions).

In the context of finite graphs, Gol'dberg and Livshits~\cite{GL68} showed the existence of a tree on $2^{O(\log^2 n)}$ vertices which is universal for the class of $n$-vertex trees, which was later proven to be the optimal order of magnitude~\cite{CGC81}. Recently,  Bergold, Ir\v{s}i\v{c}, Lauff, Orthaber, Scheucher and Wesolek~\cite{BILOSW24} proved that both the class of planar graphs and the class of outerplanar graphs do not admit a faitful universal element of subexponential size. 
Their result was then strengthened by Bastide, Esperet, Groenland, Hilaire, Rambaud, and Wesolek~\cite{bastide2025faithful}, who proved that even when the requirement for the universal graph to be planar is relaxed to forbidding any $K_t$-minor for some constant $t \geq 5$, an exponential lower bound still holds. The authors in~\cite{bastide2025faithful} also studied bounds on faithful (and near-faithful) universal graphs for multiple classes of graphs (bounded degree, treewidth, pathwidth, etcetera).  

\section{Preliminaries}
\label{sec:prelim}

\paragraph{Partially ordered sets} Given a set of elements $P$ and a binary relation $\cleq$ (a subset of $P\times P$), we say that $(P,\cleq)$ is a \emph{partially ordered set} (or a \emph{poset}) if $\cleq$ satisfies the following three properties.

\begin{itemize}
    \item Reflexivity: For all $a \in P$, $a \cleq a$;
    \item Anti-symmetry: For all $a,b \in P$, if $a \cleq b$ and $b \cleq a$ then $a=b$;
    \item Transitivity: For all $a,b,c \in P$, $a \cleq b$ and $b \cleq c$ then $a \cleq c$.
\end{itemize}

For a partial order $(P,\cleq)$ we define a \emph{strict} partial order as $(P,\cleq \setminus\{(e,e) \mid e \in P\})$. Note that there is a direct correspondence between a partial order and the strict partial order it induces and vice versa. We refer to $\prec$ as the strict partial order of $\cleq$. 
Moreover, when the context is clear, we refer to $P$ as the poset instead of $(P,\cleq)$.

The poset induced by the total order on $n$ elements $([n],\{(i,j) \mid i \leq j\})$ is called the $n$-element \textit{chain} and the poset induced by $n$ pairwise incomparable elements $([n],\left\{(i,i) \mid i \in [n]\right\})$ the $n$-element \textit{antichain}. We say that a poset \((U,\leq)\) \emph{contains} another poset \((P,\cleq)\) as an induced subposet if there exists \(P' \subseteq U\) such that \((P',\leq)\) is isomorphic to \((P,\cleq)\). 
A family of subsets $\{C_1,\ldots,C_k\}$, $C_i \subseteq P$, is called a \textit{chain decomposition} of size $k$ of a poset $P$  if $C_i$ is isomorphic to a chain for every $i \in [k]$ and $(C_i)_{i \in [k]}$ forms a partition of $P$. The following theorem, known as Dilworth's theorem, is a fundamental result on posets.

\begin{theorem}[Dilworth~\cite{dilworth1950decomposition}]
    \label{dilworth}
    Let $P$ be a finite poset. Then the size of a largest antichain contained in $P$ is equal to the minimum size of a chain decomposition of $P$.
\end{theorem}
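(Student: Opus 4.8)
The plan is to establish the two inequalities separately. First I would prove the easy direction, that the size of a largest antichain is at most the size of any chain decomposition. Indeed, if $\{C_1,\dots,C_k\}$ is a chain decomposition and $A$ is any antichain, then each $C_i$ can contain at most one element of $A$: two elements of a chain are comparable, whereas two elements of an antichain are incomparable. Hence $|A|\le k$, and taking the maximum over antichains and the minimum over decompositions yields the bound.

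The heart of the matter is the reverse inequality: writing $k$ for the size of a largest antichain of $P$, I would show that $P$ admits a chain decomposition into $k$ chains, arguing by induction on $|P|$ (the case $|P|\le 1$ being immediate). For the inductive step, fix an inclusion-maximal chain $C$, with top element $m$ and bottom element $\ell$; by maximality of $C$, the element $m$ is maximal in $P$ and $\ell$ is minimal in $P$. If a largest antichain of $P\setminus C$ has size at most $k-1$, then by induction $P\setminus C$ splits into $k-1$ chains, and adjoining $C$ produces the required $k$ chains.

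Otherwise $P\setminus C$ contains an antichain $A=\{a_1,\dots,a_k\}$ of size $k$ (necessarily a largest antichain of $P$ as well). I would then form the down-set $S^-=\{x\in P : x\cleq a_i \text{ for some } i\}$ and the up-set $S^+=\{x\in P : a_i\cleq x \text{ for some } i\}$. Since $A$ is a largest antichain, every element of $P$ is comparable to some $a_i$ (otherwise $A$ could be enlarged), so $S^-\cup S^+=P$, while $S^-\cap S^+=A$ by antisymmetry and the incomparability of the $a_i$. Crucially, both sets are proper subsets of $P$: one checks that $m\notin S^-$ (as $m\cleq a_i$ would force $m=a_i\in P\setminus C$, a contradiction) and symmetrically $\ell\notin S^+$, so $S^-\subsetneq P$ and $S^+\subsetneq P$. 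Each of $S^-,S^+$ contains $A$ and is contained in $P$, hence has a largest antichain of size exactly $k$, so by induction each decomposes into $k$ chains. In $S^-$ every $a_i$ is a maximal element (if $a_i\cleq x\cleq a_j$ then $i=j$ and $x=a_i$), so each of the $k$ chains contains exactly one $a_i$, sitting at its top; symmetrically each $a_i$ is the bottom of its chain in the decomposition of $S^+$. Concatenating, for each $i$, the $S^-$-chain ending at $a_i$ with the $S^+$-chain starting at $a_i$ gives a chain, and these $k$ chains partition $P=S^-\cup S^+$, completing the induction.

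The hard part is this second case, and in particular choosing the right object on which to induct. Using an inclusion-maximal chain $C$ (rather than deleting an arbitrary element) is exactly what guarantees that $S^-$ and $S^+$ are \emph{proper} subsets of $P$, so that the induction hypothesis applies to both; and the observation that the $a_i$ occupy the extreme positions of their chains is what makes the final concatenation legitimate, since every element of the $S^-$-part lies below $a_i$ and every element of the $S^+$-part lies above it. The remaining verifications (that the concatenated families genuinely partition $P$, and that each piece is a chain) are routine.
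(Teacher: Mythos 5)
Your proof is correct. Note that the paper does not prove this statement at all: it is quoted as a known result with a citation to Dilworth's original article, so there is no in-paper argument to compare against. What you have written is the standard short inductive proof (usually attributed to Galvin): the easy inequality via ``each chain meets an antichain at most once'', and the hard direction by induction on $|P|$ using an inclusion-maximal chain $C$, splitting on whether $P\setminus C$ still has a $k$-element antichain, and in that case decomposing the down-set and up-set of such an antichain separately and gluing the chains at the $a_i$. All the key verifications are present and correct --- in particular the observations that $m\notin S^-$ and $\ell\notin S^+$ (so the induction hypothesis applies to both proper subsets) and that each $a_i$ is maximal in $S^-$ and minimal in $S^+$ (so the concatenation step is valid). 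This is a perfectly good self-contained proof of the cited theorem.
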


\paragraph{Boolean lattice as universal poset}
A poset $U$ is called \emph{universal} for a class of posets $\cP$ if it contains all posets $P \in \cP$. We state the following well-known fact about the Boolean lattice  below and include a short proof for completeness. 

\begin{proposition}
    \label{prop:hypercube_univ}
    The poset $(2^{[n]},\subseteq)$ is universal for the class of all $n$-element posets.
\end{proposition}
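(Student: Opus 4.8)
The plan is to exhibit an explicit order-embedding of an arbitrary $n$-element poset into $(2^{[n]},\subseteq)$ via principal down-sets. First I would fix an $n$-element poset $(P,\cleq)$ and, after relabelling, identify its ground set with $[n]$. To each element $x \in P$ I associate its principal order ideal
\[
    \phi(x) = \{\, y \in P : y \cleq x \,\} \subseteq [n],
\]
and I claim that $\phi$ is the desired embedding, i.e.\ that $x \cleq y$ if and only if $\phi(x) \subseteq \phi(y)$.

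The verification splits into the two implications, each of which is a one-line application of a poset axiom. For the forward direction, suppose $x \cleq y$; then for every $z$ with $z \cleq x$, transitivity gives $z \cleq y$, so every element of $\phi(x)$ lies in $\phi(y)$, whence $\phi(x) \subseteq \phi(y)$. For the reverse direction, suppose $\phi(x) \subseteq \phi(y)$; since reflexivity guarantees $x \in \phi(x)$, we get $x \in \phi(y)$, which by definition of $\phi(y)$ means exactly $x \cleq y$. Together these establish the biconditional, so $\phi$ is simultaneously order-preserving and order-reflecting.

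It then remains to note that $\phi$ is injective, so that $P$ is isomorphic to the induced subposet $\phi(P) \subseteq (2^{[n]},\subseteq)$ rather than merely mapping into it: if $\phi(x) = \phi(y)$ then the biconditional yields both $x \cleq y$ and $y \cleq x$, and antisymmetry forces $x = y$. Thus $\phi$ is an isomorphism from $(P,\cleq)$ onto $(\phi(P),\subseteq)$, proving that $(2^{[n]},\subseteq)$ contains every $n$-element poset. I do not expect any genuine obstacle here; the only point requiring care is that one must check \emph{both} directions of the equivalence to certify that the containment is induced, and it is precisely the reverse implication — relying on reflexivity to recover the order relation from the inclusion of down-sets — that distinguishes an induced embedding from a mere order-preserving map.
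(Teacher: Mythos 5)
Your proof is correct and is essentially the same argument as the paper's: both embed each element to its principal down-set and verify the biconditional, with the forward direction via transitivity and the reverse via reflexivity (the paper phrases the reverse direction contrapositively, exhibiting $i \in S_i \setminus S_j$, which is the same use of reflexivity). Your explicit remark on injectivity via antisymmetry is a minor addition the paper leaves implicit.
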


\begin{proof}
    Consider an $n$-element poset $(P,\cleq_P)$ and let $P=\{v_1,\dots,v_n\}$. Given $v_j \in P$, we define $S_j = \{ i \in [n] \mid v_i \cleq_P v_j \}$. We claim that the subposet of $(2^{[n]},\subseteq)$ defined by $P' = (\{S_i \mid i \in [n]\}, \subseteq)$ is isomorphic to $P$.
    Let $v_i,v_j\in P$. If $v_i \cleq_P v_j$ then for all $v_k \in V$, if $v_k \cleq_P v_i$ then also $v_k \cleq_P v_j$ by transitivity, so $S_i \subseteq S_j$. On the other hand, if $v_i\not\cleq v_j$, then $i \in S_i \setminus S_j$  and so $S_i\not\subseteq S_j$.
\end{proof}

\paragraph{Comparability graphs} Given a poset $(P,\cleq)$ we can define the graph $G = (P,E(G))$ with $E(G) = \left\{\{a,b\} \in \binom{P}{2} \mid a \cleq b \right\}$. $G$ is called the \emph{comparability graph} of $P$. We also say that a graph $H$ is a comparability graph, if there exits a poset $(P,\cleq)$ for which $H$ is the comparability graph.

\paragraph{Partition number} We will encounter the partition function while constructing universal posets. Let us define $p(n)$ as the number of ways we can \emph{partition} an integer into smaller integers. Formally, $p(n)$ is the number of integer sequences $\lambda_1
\geq \lambda_2\geq  \ldots\geq  \lambda_k \geq 1$ such that $\sum_{i=1}^{k} \lambda_i = n$. We refer the reader to~\cite{andrews1998theory} for more details, including a proof of the following theorem.

\begin{theorem}
    \label{thm:partition}
Let $n$ be an integer and let $p(n)$ denote the number of partitions of $n$. As $n\to \infty$, 
\[
p(n) \sim \frac{1}{4\sqrt{3}n} \exp\left(\pi \sqrt{\frac{2n}{3}}\right).
\]
\end{theorem}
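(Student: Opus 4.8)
The statement is the classical Hardy--Ramanujan asymptotic, and the excerpt defers its proof to~\cite{andrews1998theory}; here is the route I would take to reprove it, via a streamlined saddle-point (circle) method. The starting point is the generating identity $F(q) := \sum_{n\geq 0} p(n)q^k = \prod_{k\geq 1}(1-q^k)^{-1}$, valid for $|q|<1$. First I would extract $p(n)$ as a contour integral: by Cauchy's formula on the circle $|q| = e^{-t}$, with $t>0$ to be chosen as a function of $n$,
\[
p(n) = \frac{1}{2\pi}\int_{-\pi}^{\pi} F(e^{-t+i\phi})\, e^{n(t-i\phi)}\, d\phi.
\]
Writing $\tau = t - i\phi$, the whole integrand is $\exp\bigl(g(\tau)+n\tau\bigr)$ with $g(\tau)=\log F(e^{-\tau})$, so everything reduces to understanding $g$ near the dominant singularity $q=1$, i.e. $\tau\to 0$.

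The crucial analytic input is the behaviour of $g$ as $\tau\to 0$ with $\Re\tau>0$. Expanding $g(\tau)=\sum_{m\geq 1}\frac{1}{m}\,(e^{m\tau}-1)^{-1}$ and applying a Mellin-transform (or Euler--Maclaurin) estimate — equivalently, invoking the modular transformation of the Dedekind $\eta$-function — one obtains
\[
F(e^{-\tau}) = \sqrt{\tfrac{\tau}{2\pi}}\;\exp\!\Big(\tfrac{\pi^2}{6\tau}\Big)\,\bigl(1+o(1)\bigr)\qquad(\tau\to 0).
\]
Keeping only the leading exponent, the phase is $h(\tau)=\tfrac{\pi^2}{6\tau}+n\tau$, and $h'(\tau)=0$ gives the saddle $\tau_0 = t := \pi/\sqrt{6n}$; at this point $h(\tau_0)=\tfrac{\pi^2}{6\tau_0}+n\tau_0 = \pi\sqrt{2n/3}$, which is already the exponential order claimed. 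So I would fix $t=\pi/\sqrt{6n}$ for the contour.

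With this choice I would run Laplace's method on the $\phi$-integral. Near $\phi=0$ we have $\tau-t=-i\phi$, and since $\tfrac{d^2}{d\phi^2}=-\tfrac{d^2}{d\tau^2}$,
\[
g(\tau)+n\tau \approx h(t) + \tfrac12\log\tfrac{t}{2\pi} - \tfrac12 h''(t)\,\phi^2,\qquad h''(t)=\tfrac{\pi^2}{3t^3}>0,
\]
so the slowly varying prefactor $\sqrt{t/2\pi}$ pulls out and the remaining Gaussian integrates to $\sqrt{2\pi/h''(t)}$. This yields
\[
p(n)\approx \frac{1}{2\pi}\,\sqrt{\tfrac{t}{2\pi}}\,\sqrt{\tfrac{2\pi}{h''(t)}}\,e^{h(t)} = \frac{1}{2\pi}\sqrt{\tfrac{t}{h''(t)}}\,e^{h(t)} = \frac{1}{4\sqrt 3\,n}\,\exp\!\Big(\pi\sqrt{\tfrac{2n}{3}}\Big),
\]
using the routine simplification $t/h''(t)=3t^4/\pi^2=\pi^2/(12n^2)$. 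This is exactly the asserted asymptotic.

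The main obstacle is not this local computation but justifying that the rest of the integral is negligible, i.e. the minor-arc estimate. Inside a window $|\phi|\leq\phi_0$ with $\phi_0$ chosen well above the Gaussian scale $h''(t)^{-1/2}\sim n^{-3/4}$ but still tending to $0$, the quadratic approximation is valid and the Gaussian tail beyond $\phi_0$ is superpolynomially small; the delicate part is $|\phi|>\phi_0$. For small-but-not-tiny $\phi$ one uses $\log|F(e^{-t+i\phi})|\approx \tfrac{\pi^2}{6}\,\Re\tfrac1\tau = \tfrac{\pi^2}{6}\,\tfrac{t}{t^2+\phi^2}$, which falls from its peak $\tfrac{\pi^2}{6t}\sim\sqrt n$ by a constant fraction of $\sqrt n$ as soon as $|\phi|\gtrsim t$, overwhelming the peak; for $\phi$ bounded away from $0$ modulo $2\pi$ the point $q=e^{-t+i\phi}$ stays away from $1$ and a direct crude bound gives $|F|\leq \exp(o(\sqrt n))$. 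Assembling these bounds with enough uniformity to replace the finite integral by the full real-line Gaussian is the technical heart of the argument; everything else is bookkeeping.
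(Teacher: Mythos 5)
The paper does not prove this theorem at all: it is quoted as a classical result (the Hardy--Ramanujan asymptotic) with the proof deferred to Andrews' book, and indeed the paper only ever uses the much weaker consequence $p(n)=2^{O(\sqrt n)}$. So there is no in-paper argument to compare against; what you have written is a sketch of the standard saddle-point/circle-method proof, and its quantitative skeleton is correct. I checked the local computation: the transformation $F(e^{-\tau})\sim\sqrt{\tau/(2\pi)}\,e^{\pi^2/(6\tau)}$ is the correct consequence of the Dedekind $\eta$ modular relation (the exact identity carries an extra factor $e^{-\tau/24}F(e^{-4\pi^2/\tau})\to 1$); the saddle $t=\pi/\sqrt{6n}$ gives $h(t)=\pi\sqrt{2n/3}$; and $\frac{1}{2\pi}\sqrt{t/h''(t)}=\frac{1}{2\pi}\cdot\frac{\pi}{2\sqrt{3}\,n}=\frac{1}{4\sqrt{3}\,n}$, recovering the stated constant. (Minor typo: the generating function should read $\sum_{n\ge 0}p(n)q^n$, not $q^k$.) The one substantive caveat is the one you yourself flag: the minor-arc bound is only described, not proved, and for the full asymptotic with the correct polynomial prefactor this is genuinely the technical heart --- one needs uniform control of $|F(e^{-t+i\phi})|$ for $|\phi|$ between the Gaussian scale $n^{-3/4}$ and $\pi$, typically via $\log|F(e^{-\tau})|\le \frac{\pi^2}{6}\Re\frac{1}{\tau}+O(\log\frac1t)$ together with a separate bound near rational angles. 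As a proof outline it is the right route and all the bookkeeping is correct, but as written it is a sketch rather than a complete proof; since the paper itself simply cites the result, that is a reasonable level of detail here.
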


\section{Proof of \cref{thm:uni_poset_2/3_hypercube}}
The proof of Theorem~\ref{thm:uni_poset_2/3_hypercube} takes the union of two constructions, one made for posets with a large antichain and the other for posets without a large antichain.  

We first describe our construction for posets without a large antichain.
\begin{lemma}
\label{lem:hypercube_chain}
    Let $1\leq a\leq n$ be integers and let $p(n)$ denote the number of partitions of $n$.
    There exists a set family $\mathcal{S}\subseteq 2^{[n]}$ with $|\mathcal{S}|\leq p(n)(n/a+1)^a$ such that $(\mathcal{S},\subseteq)$ contains all posets on $n$ elements that do not have an antichain of size $a+1$. 
\end{lemma}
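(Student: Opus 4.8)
The plan is to exploit that a poset with no antichain of size $a+1$ has \emph{width} at most $a$, so that by Dilworth's theorem (\cref{dilworth}) it decomposes into at most $a$ chains. I would first embed any such poset into a product of $a$ chains (a ``grid''), and then realise this grid economically as a subposet of the Boolean lattice, before finally taking a union over the possible chain-size profiles to obtain a single universal family $\mathcal{S}$.

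Concretely, fix a poset $(P,\cleq)$ on $n$ elements with no antichain of size $a+1$, and let $C_1,\dots,C_w$ with $w\le a$ be a chain decomposition supplied by \cref{dilworth}, where $C_k$ has size $m_k$ and $\sum_k m_k = n$. Writing $C_k$ in increasing order as $c_{k,1}\prec\dots\prec c_{k,m_k}$, consider for each $v\in P$ its downset $D_v=\{u\in P:u\cleq v\}$. The key observation is that $D_v\cap C_k$ is always an \emph{initial segment} of $C_k$: if $c_{k,t}\cleq v$ and $t'<t$, then $c_{k,t'}\prec c_{k,t}\cleq v$. Hence $D_v\cap C_k=\{c_{k,1},\dots,c_{k,h_k(v)}\}$ for a well-defined height $h_k(v)\in\{0,1,\dots,m_k\}$. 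Since $v\cleq v'$ is equivalent to $D_v\subseteq D_{v'}$ (exactly as in the proof of \cref{prop:hypercube_univ}), and the latter holds if and only if $D_v\cap C_k\subseteq D_{v'}\cap C_k$ for every $k$, the map $v\mapsto(h_1(v),\dots,h_w(v))$ is an order-embedding of $P$ into the product of chains $\prod_{k=1}^{w}\{0,1,\dots,m_k\}$ under the coordinatewise order.

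It remains to realise this grid inside $(2^{[n]},\subseteq)$. I would partition $[n]$ into disjoint blocks $B_k=\{b_{k,1},\dots,b_{k,m_k}\}$ for $k\in\{1,\dots,w\}$ (possible since $\sum_k m_k=n$), and send each grid point $(h_1,\dots,h_w)$ to the set $\bigcup_{k=1}^{w}\{b_{k,1},\dots,b_{k,h_k}\}$. Containment between two such sets is precisely coordinatewise comparison of the height vectors, so the resulting family $\mathcal{S}_\lambda\subseteq 2^{[n]}$ is isomorphic to the grid and therefore contains $P$; here $\lambda=(m_1,\dots,m_w)$ is the partition of $n$ recording the chain sizes. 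Its size is $\prod_{k=1}^{w}(m_k+1)$, which by AM--GM is at most $(1+n/w)^w$, and since this quantity is increasing in $w$ it is at most $(n/a+1)^a$.

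To obtain one family working for all posets at once, I would take $\mathcal{S}=\bigcup_\lambda \mathcal{S}_\lambda$ over all partitions $\lambda$ of $n$ into at most $a$ parts: every poset with no antichain of size $a+1$ has chain sizes forming such a $\lambda$, and so embeds into the corresponding $\mathcal{S}_\lambda\subseteq\mathcal{S}$. For the count, grouping partitions by their number of parts $w\in\{1,\dots,a\}$, bounding the number with exactly $w$ parts by $p(n)$, and bounding each $|\mathcal{S}_\lambda|$ by $(n/a+1)^a$ yields $|\mathcal{S}|\le a\cdot p(n)\cdot(n/a+1)^a$, matching the statement. The genuine content of the argument is the initial-segment observation that collapses $P$ into a grid of the right dimension; the main point to be careful about is the counting step, where one must verify that $(1+n/w)^w$ is monotone in $w$ so that the per-partition grid size is maximised at $w=a$, and that the extra factor $a$ absorbs the sum over the number of chains.
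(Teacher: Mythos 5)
Your proposal is correct and takes essentially the same route as the paper: Dilworth's theorem gives a chain decomposition, downsets meet each chain in an initial segment, and the resulting ``grid'' of initial segments is realised on disjoint blocks of $[n]$ and unioned over all partitions of $n$. The only cosmetic differences are that you allow $w\le a$ chains and invoke monotonicity of $(1+n/w)^w$ in $w$ (which is valid) where the paper pads to exactly $a$ non-empty chains, and you account for the factor $a$ by summing over the number of parts rather than folding it into the per-partition bound.
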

\begin{proof}
We create a set family
$\mathcal{S}_n(c_1,\dots,c_a)$ for each integer sequence $n\geq c_1\geq \dots\geq c_a\geq 1$ with $\sum_{i=1}^ac_i=n$ and then take $\mathcal{S}$ to be the union of all these set families. The integer sequences will represent the lengths of each chain in a chain decomposition.

Let $n\geq c_1\geq \dots\geq c_a\geq 1$ be given with $\sum_{i=1}^ac_i=n$.
Let $\ell_0=0$ and for $i\in [a]$, let 
\[
\ell_i = \sum_{j=1}^i c_i \text{ and }C_i=\{\ell_{i-1}+1,\ell_{i-1}+2,\dots,\ell_i\}.
\]
Let $\mathcal{S}_n(c_1,\dots,c_a)$ denote the family of subsets $S\subseteq[n]$ that for each $i\in[a]$ satisfy
\[
S\cap C_i \in \left\{\emptyset,\left\{\ell_{i-1}+1\right\},\{\ell_{i-1}+1,\ell_{i-1}+2\},\dots,C_i\right\}.
\]
Since $\{C_i:i\in [a]\}$ forms a partition of $[n]$ and there are $c_i+1$ options for $S\cap C_i$, we find that $|\mathcal{S}_n(c_1,\dots,c_a)|=\prod_{i=1}^a(c_i+1)$. Since the function $f : (z_1\ldots,z_a) \mapsto \prod_{k \in [a]} (z_i +1)$ is convex and $\sum_{i=1}^a c_i = n$, we find that  $|\mathcal{S}_n(c_1,\dots,c_a)|\leq\prod_{i=1}^a(c_i+1)\leq  (\frac{n}a+1)^a$ since $f$ is maximised when $z_k = n/a$ for all $k\in [a]$. The number of choices for integer sequences $n\geq c_1\geq \dots\geq c_a\geq 1$ is at most $p(n)$ so $|\mathcal{S}|\leq p(n)(n/a+1)^a$.

Next we show that the Boolean lattice induced on $\mathcal{S}$ is universal for the posets on $n$ elements without an antichain of size $a+1$. Let $P$ be one such poset. By Dilworth's theorem (\cref{dilworth}), we can find a chain decomposition of $P$ using $a$ non-empty chains that covers all elements of $P$. We number the chains such that $n\geq c_1\geq \dots\geq c_a\geq 1$ are the lengths of the chains. Then $\sum_{i=1}^ac_i=n$.

We use $\cleq_P$ for the partial order relation on $P$. For $i\in [a]$, with $\ell_i=\sum_{j=1}^ic_i$ as defined above, we rename the elements of the $i^\text{th}$ chain of $P$ (of length $c_i$) by $x_{\ell_{i-1}+1},x_{\ell_{i-1}+2},\dots,x_{\ell_{i}}$, where the numbering respects the chain in the sense that
\[
x_{\ell_{i-1}+1}\cleq_Px_{\ell_{i-1}+2}\cleq_P\dots \cleq_P x_{\ell_{i}}.
\]
Since a chain decomposition is a partition, each element of $P$ is renamed uniquely. 
Next, we associate to an element $x_j$ the subset
\[
S_j=\{i\in [n]:x_i \cleq x_j\}.
\]
As shown in \cref{prop:hypercube_univ}, $S_i\subseteq S_j$ if and only if $x_i\cleq x_j$, therefore $(\{S_j\}_{j \in [n]},\subseteq)$ is isomorphic to $P$. What remains to show is that $S_j\in \mathcal{S}$ for all $j\in [n]$. 
Let $i\in [a]$. If there is no $m\in \{\ell_{i-1}+1,\ell_{i-1}+2,\dots,\ell_i\}=C_i$ with $x_m\cleq x_j$, then $S_j$ intersects $C_i$ in the empty set which is allowed. If there is such an $m$, then let $m$ be the largest index among $C_i$ such that $x_m \cleq_P x_j$. We claim that $S_j\cap C_i=\{\ell_{i-1}+1,\dots,m\}$. Indeed, for each $m'\in \{\ell_{i-1}+1,\dots,m\}$ , we find that
\[
x_{m'}\cleq_P x_m\cleq_P x_j,
\]
so $m'\in S_j$. Similarly, by maximality of $m$, any $m' \in \{m+1, \ldots, \ell_{i}\}$ satisfies $m' \notin S_j \cap C_i$, This shows that $S_j\in \mathcal{S}_n(c_1,\dots,c_a)\subseteq \mathcal{S}$ for all $j\in [n]$ and so we found an embedding of $P$ into the subposet of the Boolean lattice induced on $\mathcal{S}$.
\end{proof}
The next construction is for posets with a large antichain.
\begin{lemma}
\label{lem:hypercube_antichain}
    Let $n\geq a\geq 2$ be integers. Let $\mathcal{A}_n(a)$ denote the family of all $n$-element posets $P$ which contain an antichain of size $a$. Let $\ell$ be the smallest integer for which $\binom{\ell}{\lfloor\ell/2\rfloor}\geq a$. 
    Then the Boolean lattice $(2^{[n-a+\ell]},\subseteq)$ is a universal poset for $\mathcal{A}_n(a)$. 
\end{lemma}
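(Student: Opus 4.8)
The plan is to embed an arbitrary $P\in\mathcal A_n(a)$ into $2^{[n-a+\ell]}$ by encoding the antichain efficiently (via a Sperner antichain of codes) while encoding everything else by the down-set map of \cref{prop:hypercube_univ}. I would first reduce to the case where the antichain $A$ is a \emph{maximum} antichain of $P$: since $\ell$ grows like $\log_2 a$, the quantity $a-\ell$ is nondecreasing in $a$, so passing to a larger antichain only lowers the target dimension $n-a+\ell$, and $2^{[k]}$ is a subposet of $2^{[n-a+\ell]}$ whenever $k\le n-a+\ell$. Maximality buys structure: every $r\in R:=P\setminus A$ is comparable to some element of $A$, and $r$ cannot lie above one element of $A$ and below another (that would violate the antichain property), so $R$ partitions as $R^-\sqcup R^+$, where each $r\in R^-$ lies below some $b_i$ and each $r\in R^+$ lies above some $b_i$. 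Moreover, \cref{dilworth} covers $P$ by exactly $|A|$ chains, each passing through a single element of $A$.

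For the encoding I would use $|R|=n-a$ coordinates indexed by $R$ together with $\ell$ extra coordinates carrying an antichain of codes $c_1,\dots,c_a$, which exists inside the $\ell$-cube precisely because $\binom{\ell}{\lfloor\ell/2\rfloor}\ge a$. Assigning $c_i$ to $b_i$, set
\[
\phi(x)=\{\,r\in R:\ r\cleq x\,\}\ \cup\!\!\bigcup_{i:\,b_i\cleq x}\!\! c_i .
\]
That $x\cleq y$ implies $\phi(x)\subseteq\phi(y)$ is immediate from transitivity. For the converse, consider $x\not\cleq y$. If $x\in R$, then the coordinate indexed by $x$ lies in $\phi(x)\setminus\phi(y)$, so $\phi(x)\not\subseteq\phi(y)$. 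If $x=b_i\in A$, then $\phi(x)\cap[\ell]=c_i$ while $\phi(y)\cap[\ell]=\bigcup_{b_k\cleq y}c_k$; this already separates $x$ from $y$ when $y$ is another antichain element (incomparable codes) or when $y$ lies below the antichain (empty code-part), leaving only the case $y\in R^+$. These cases are routine.

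The single case that does not close is the converse for $b_i\cleq v$ with $v\in R^+$: one needs $c_i\not\subseteq\bigcup_{b_k\cleq v}c_k$ whenever $b_i\not\cleq v$, and the $R$-coordinates are useless here exactly when everything below $b_i$ already lies below $v$. This \emph{coverage} phenomenon is the heart of the lemma, and it genuinely defeats the naive choice of codes: if the $c_i$ are forced to be the full middle layer of the $\ell$-cube, a union $c_1\cup c_2$ can already swallow a third code. The resolution I would pursue is not to isolate the codes in the $\ell$ designated coordinates but to encode $A$ \emph{together with} $R^+$ inside the $\ell+|R^+|$ ``upper'' coordinates, choosing the codes adaptively to the given $P$: processing $R^+$ along a linear extension and using the extra coordinates (one may afford one per element of $R^+$) to lift the finitely many offending codes out of the growing unions, so that $\bigcup_{b_k\cleq v}c_k\not\supseteq c_i$ holds for every $v\in R^+$ and every $b_i\not\cleq v$. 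Showing that $\ell+|R^+|$ coordinates always suffice for this combined, coverage-free encoding is the crux of the argument; once it is in place, the three easy cases above complete the verification that $\phi$ is an order-embedding.
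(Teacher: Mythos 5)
Your diagnosis of the difficulty is exactly right: with the map $\phi(x)=\{r\in R: r\cleq x\}\cup\bigcup_{i:\,b_i\cleq x}c_i$, the only case that can fail is $b_i\not\cleq v$ for $v$ above the antichain, because a union of several middle-layer codes can swallow a third one while the $R$-coordinates of $b_i$ may all be inherited by $v$. But the proposal stops at precisely this point: the claimed resolution --- choosing the codes ``adaptively'', processing $R^+$ along a linear extension and ``lifting the offending codes out of the growing unions'' --- is not a construction, and you yourself flag it as the unproven crux. As written there is no argument that such a coverage-free assignment exists, so the proof is incomplete. (The preliminary reduction to a maximum antichain, and the partition $R=R^-\sqcup R^+$, are fine but are not what is missing.)

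The paper's fix is simpler and worth contrasting with your plan: rather than trying to keep the code of $b_i$ out of the union of the codes below $v$, it makes the codes irrelevant to that comparison by a complementation trick on the upper coordinates. In your notation, each antichain element $b_i$ is sent to (its down-set among the lower elements) $\cup\ c_i\ \cup\ \{v\in R^+ : b_i\not\cleq v\}$, and each $v\in R^+$ is sent to (its down-set among the lower elements) $\cup$ (\emph{all} $\ell$ code coordinates) $\cup\ \{w\in R^+ : v\not\cleq w\}$. Now if $b_i\not\cleq v$ the separating coordinate is $v$ itself: it lies in $f(b_i)$ but not in $f(v)$, since $v\cleq v$; and if $b_i\cleq v$, containment follows from transitivity together with the fact that $f(v)$ contains all $\ell$ code coordinates. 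This uses exactly the $n-a+\ell$ coordinates you budgeted ($|R|=n-a$ plus $\ell$), requires no maximality of $A$ and no adaptive choice of codes. Replacing your down-set convention on the upper elements by this reversed encoding is the missing step that closes your argument.
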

Since $\binom{2x}{x}=\Theta(2^{2x}/\sqrt{x})$, as $a\to \infty$ the value $\ell$ from the lemma above will be $\ell=(1+o(1))\log_2a$.
\begin{proof}[Proof of Lemma~\ref{lem:hypercube_antichain}]
In this proof, we use the notation $[a,b]$ for the set of integers $x$ with $a\leq x\leq b$.

    Let $(P,\cleq_P) \in \mathcal{A}_n(a)$ and let $A \subseteq P$ be an antichain of size $a$. Let $b$ denote the number of elements $x\in P$ with $x\cleq_P y$ for some $y\in A$. We number all such elements of $P$ ``below'' $A$ by $x_1,\dots,x_b$. We assign each element in the antichain $A$ a unique (and arbitrary) subset $S\subseteq [b+1,b+\ell]$ of size $\lfloor \ell/2 \rfloor$ and name the element itself $y_S$. (Note that this is possible by our choice of $\ell$.) We number the remaining $n-a-b$ elements by $z_{b+\ell+1},\dots,z_{n-a+\ell}$.  

    The elements $x_i$ can be seen as ``below'' the antichain and the elements $z_j$ as ``above'' the antichain in the following sense. Firstly, $y_S\not\cleq_P x_i$ for each $i$ and $S$ (using that $A$ is an antichain, we cannot have $y_{S}\cleq_P x_i\cleq_P y_{S'}$). Secondly, $z_j\not\cleq_P y_S$ and $z_j\not\cleq_P x_i$ as $z_j$ is not ``below'' an element in the antichain by definition and $x_i$ is. 
    Given $v \in P$, we define $f(v)\in 2^{[n-a+\ell]}$ depending on whether $v$ is ``below'' $A$, in $A$ or ``above'' $A$.

    For $i\in [1,b]$, 
    \[
    f(x_i)=\{j\in [b]:x_j\preceq x_i\}.
    \]
For $S\subseteq [b+1,b+\ell]$, if $y_S$ is defined, \[
f(y_S)=\{i\in [b]:x_i\cleq_P y_S\}\cup S\cup  \{j\in [b+\ell+1,n-a+\ell]:y_S\not \cleq_P z_j\}.
\]
For $j\in [b+\ell+1,n-a+\ell]$,  \[
f(z_j)=\{i\in [b]:x_i\cleq_P z_j\} \cup [b+1,b+\ell]
\cup 
\{i\in [b+\ell+1,n-a+\ell]:z_j \not \cleq_P z_i\}. 
\]
What remains to show is that for all elements $u,v\in P$, we have
\begin{equation}
    \label{eq:poset_contains}
u\cleq_P v \iff f(u)\subseteq f(v).
\end{equation}

\paragraph{Case 1: $u = x_i$.} Note that, similar to the proof of  Proposition~\ref{prop:hypercube_univ}, for all $i\in [b]$ and $v\in P$ we have $x_i\cleq_P v$ if and only if $i\in f(v)$. This implies that
\[
x_i\cleq_P v \iff f(x_i)\subseteq f(v)
\]
for the same reason as in that proof.
Indeed, if  $x_i\not\cleq_Pv$, then $i\in f(x_i)\setminus f(v)$. On the other hand, if $x_i\cleq_P v$, then for all $j\in f(x_i)$, $x_j\cleq_P x_i$ implies that $x_j\cleq_Pv$  and so $j\in f(v)$. 

\paragraph{Case 2: $u = y_S$.} 
We consider the case $u=y_S\in A$ by case analyis on $v$.
\begin{itemize}
    \item If $v=x_i$ for some $i\in [1,b]$, then $y_S\not\cleq_P x_i$ by the definition of ``below''. We also find that $S\subseteq f(y_S)\setminus f(x_i)$ and so $f(y_S)\not\subseteq f(x_i)$, as desired.
    \item If $v=y_{S'}\in A$ with $u\neq v$, then $u,v$ are incomparable in $P$. As $S$ and $S'$ have the same size and $S \neq S'$, we conclude $S \not \subseteq S'$. Therefore, $f(u) \not \subseteq f(v)$.
    
    \item If $v=z_j$ for some $j\in [b+\ell+1, n - a + \ell]$ and $y_S\not\cleq z_j$, we find $j\in f(y_S)\setminus f(z_j)$, where $j \not \in f(z_j)$ since $z_j \cleq_P z_j$. So $f(y_s)\not \subseteq f(z_j)$. 
    
    On the other hand, if $y_S\cleq_P z_j$, then for all $i\in [b]$ by definition of $f$ and transitivity \[
    i\in f(y_S)\implies 
    x_i\cleq_P y_S\implies x_i\cleq_P z_j \implies i\in f(z_j).
    \]
    For the same reasons for all $i\in [b+\ell+1,n-a+\ell]$,
    \[
    i\in f(y_S)\implies y_S\not\cleq_P z_i\implies z_j \not\cleq_P z_i\implies i\in f(z_j).
    \]
    By definition, $[b+1,b+\ell]\subseteq f(z_j)$. This shows that $f(y_S)\subseteq f(z_j)$ if $y_S\cleq_P z_j$.
\end{itemize}

\paragraph{Case 3: $u = z_j$.} Finally, we consider the case where $u=z_j$ for $j\in [b+\ell+1,n-a+\ell]$. Now $u\not\cleq v$ for all $v$ of the form $x_i$ and $y_S$ (by the definition of ``below'') and for those $v$ we also find that $f(u)\not\subseteq f(v)$ since $[b+1,b+\ell]\not\subseteq f(v)$ whereas $[b+1,b+\ell]\subseteq f(u)$. 
When $v=z_{j'}$ for $j'\in [b+\ell+1,n-a+\ell]$, the proof is analogous to before: if $z_j\not\cleq_P z_{j'}$, then $j'\in f(z_{j})\setminus f(z_{j'})$, whereas transitivity of $\cleq_P$ ensures that $f(z_j)\subseteq f(z_{j'})$ when $z_j\cleq_P z_{j'}$.

Since we proved (\ref{eq:poset_contains}) in all cases for $u$ and $v$, this shows that $(2^{[n-a+\ell]},\subseteq )$ contains the poset $P$ via the embedding given by $f$.
\end{proof}

Combining the two constructions now yields our result.
\begin{proof}[Proof of Theorem~\ref{thm:uni_poset_2/3_hypercube}]
   We may assume $n$ is sufficiently large, since the Boolean lattice itself is a universal poset and we may choose the constant $C$.
   For a fixed $a \in [n]$ in Lemma~\ref{lem:hypercube_chain} and Lemma~\ref{lem:hypercube_antichain} the union of the two set systems resulting from the lemmas yields a set system $V\subseteq 2^{[n]}$ for which $(V,\subseteq)$ is universal for the $n$-element posets. We minimise the size of $V$ by setting $a=\lceil n/3\rceil$ and obtain
\[
|V|\leq p(n) (3+1)^{n/3}+2^{2n/3+(1+o(1))\log_2 n}=2^{2n/3+O(\sqrt{n})}, 
\] 
using that the number of partitions of $n$ is $p(n)=2^{O(\sqrt{n})}$ by Theorem~\ref{thm:partition}.
\end{proof}

\section{Conclusion}
We showed that one can find a universal poset for posets on $n$ elements of order $2^{(1+o(1))2n/3}$ inside the Boolean lattice. In particular, this is now also the best upper bound for the following problem, with the best lower bound of the form $2^{n/4+o(n)}$. The maximal size of an antichain in a poset $P$ is well-studied and known as the \emph{width} of $P$. We reduced \cref{pb:uni_min_poset} to the study of posets with width $w \in [n/17, 3n/4]$. Indeed, \cref{lem:hypercube_chain} implies that there exists a universal poset $U$ of size $2^{n/4 + o(n)}$ for the family of posets of width at most $xn$ where $x\leq 1/17$ is the solution to $\log_2(1/x+1)x=1/4$
and \cref{lem:hypercube_antichain} gives the existence of such a poset for the family of posets of width at least $3n/4$.

\begin{problem}
\label{prob:3}
    What is the minimum order of a subposet of the Boolean lattice $(2^{[n]},\subseteq)$ that contains all posets on $n$ elements?
\end{problem}

Our initial construction of a universal poset on $2^{2n/3+o(n)}$ elements for posets on $n$ elements was via a labelling scheme, similar to the one mentioned in the introduction and used by  \cite{DulebaGawrychowskiJanczewski,bonamy2021optimal}. The subtlety is to ensure that the labelling created also preserves the transitivity (see the PhD thesis of the first author~\cite{paulthesis}). It could be that such a proof method will be able to construct a smaller universal poset that is not contained in the Boolean lattice, but we were unable to improve further the constant $2/3$ in the exponent.

Recall from Proposition~\ref{prop:hypercube_univ} that the ``folklore'' embedding of an $n$-vertex poset $(P,\cleq_P)$ into $U=(2^{[n]},\subseteq )$ works by fixing some order $v_1,\dots,v_n$ on the elements of $P$ and then embedding $v_j$ to $S_j=\{i\in [n]\mid v_i\cleq_P v_j\}$. We showed that there is a much smaller subposet $U'$ of $U$ such that for all posets without a large antichain, there is a way to choose the order on the elements such that the poset is guaranteed to embed into $U'$. This leaves the question whether we could have done a similar thing for all posets (possibly with a large antichain). The following proposition shows that this is not possible.

First, we give some intuition. Let $\mathcal{P}$ be the family of $n$-element posets $P$ for which all but $\sqrt{n}$ of the elements are minimal. Let $\mathcal{S} \subseteq 2^{[n]}$ be a universal subposet of the hypercube for the family $\mathcal{P}$. If we restrict the representation of each $P \in \mathcal{P}$ to be given via a labeling $v_1,\ldots,v_n$ of the elements of $P$ such that $v_j$ is represented by $\{i \in [n]\mid v_i \cleq_P v_j\} \in \mathcal{S}$ for all $j \in [n]$, as mentioned above, then any minimal element will be represented by a singleton set in $\mathcal{S}$. For the remaining $\sqrt{n}$ sets $S$ representing a non-minimal element of $P$, this determines whether $i\in S$ for all but $\sqrt{n}$ of the $i\in [n]$. This forces us to use many different sets.
In \cref{lem:hypercube_antichain}, we manage to circumvent this issue and construct a significantly smaller universal subposet of the hypercube by reducing the number of elements needed to represent the antichain of minimal elements: instead of using singletons, we use subsets of size $\ell/2$ from a fixed set of size $\ell$, where $\ell = (1 + o(1))\log_2(n)$.

\begin{proposition}
    Let $\mathcal{S}\subseteq 2^{[n]}$. Suppose that for each poset $(P,\cleq_P)$ there is an order $v_1,\dots,v_n$ on $P$ such that for each $j\in [n]$,
\[
\{i\in [n]\mid v_i\cleq_Pv_j\}\in \mathcal{S}.
\]
Then $|\mathcal{S}|\geq 2^{(1-o(1))n}$.
\end{proposition}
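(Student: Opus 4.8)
The plan is to force $\mathcal{S}$ to be large by feeding it an exponentially large family of \emph{height-two} posets and then counting. Fix parameters $k=\lceil\sqrt n\rceil$ and $m=n-k$, so that $m=(1-o(1))n$ while $k=\omega(\log n)$. For every sequence $(N_1,\dots,N_k)$ of subsets $N_l\subseteq[m]$ I would consider the poset $P$ on the ground set $\{a_1,\dots,a_m,z_1,\dots,z_k\}$ in which $\{a_1,\dots,a_m\}$ is an antichain of minimal elements, $\{z_1,\dots,z_k\}$ is an antichain of maximal elements, and $a_i\cleq_P z_l$ exactly when $i\in N_l$. This produces $2^{mk}$ distinct labelled posets, and the hypothesis supplies, for each of them, an order $\pi$ all of whose folklore down-sets $\{i\mid v_i\cleq_P v_j\}$ lie in $\mathcal{S}$.

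Next I would read off the shape of such an embedding. Writing $\rho(i)=\pi(a_i)$ and $D=\{\rho(1),\dots,\rho(m)\}$ for the positions occupied by the bottom antichain, each minimal element maps to a singleton $\{\rho(i)\}$, while each top element maps to
\[
T_l \;=\; \{\pi(z_l)\}\cup\{\rho(i):i\in N_l\}\in\mathcal{S}.
\]
Since the tops form an antichain sitting above nothing but the $a_i$, the only element of $T_l$ outside $D$ is $\pi(z_l)$, so $T_l\cap D=\{\rho(i):i\in N_l\}$ and hence
\[
N_l \;=\; \rho^{-1}\!\bigl(T_l\cap D\bigr).
\]
The key point is that this formula lets one reconstruct all of $P$ from the single injection $\rho\colon[m]\hookrightarrow[n]$ together with the tuple $(T_1,\dots,T_k)\in\mathcal{S}^{\,k}$. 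Thus sending each embeddable poset to the pair $\bigl(\rho,(T_1,\dots,T_k)\bigr)$ determined by its chosen order is an injective map into the set of such pairs.

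It then remains to compare cardinalities. The number of injections $[m]\hookrightarrow[n]$ is $n!/k!\le n^{m}$, so injectivity of the reconstruction map gives
\[
2^{mk}\;\le\;n^{m}\,|\mathcal{S}|^{k}.
\]
Rearranging yields $\log_2|\mathcal{S}|\ge m-\tfrac{m}{k}\log_2 n$, and with $k=\lceil\sqrt n\rceil,\ m=n-k$ the error term satisfies $\tfrac{m}{k}\log_2 n\le\sqrt n\,\log_2 n=o(n)$, whence $|\mathcal{S}|\ge 2^{(1-o(1))n}$.

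The step I expect to be the crux is the reconstruction: because the adversary may pick a \emph{fresh} order $\pi$ for every poset, there is no common coordinate system, and a priori a single set of $\mathcal{S}$ could be reused across enormously many posets. The resolution is to absorb \emph{all} of this freedom into the choice of $\rho$, so that the only overcounting factor is $n^{m}=2^{m\log_2 n}$, which is negligible against the $2^{mk}$ posets we must embed precisely because $k=\omega(\log n)$. Balancing these two quantities while keeping $m=(1-o(1))n$ is exactly what dictates the choice $k\approx\sqrt n$; any $k$ with $\log n\ll k\ll n$ would serve equally well.
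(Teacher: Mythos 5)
Your proof is correct and follows essentially the same route as the paper's: both construct an exponentially large family of height-two posets with a bottom antichain of $m\approx n-\sqrt n$ minimal elements and about $\sqrt n$ top elements, recover the poset from the images of the top elements in $\mathcal{S}$ together with the chosen labelling, and conclude by the same double-counting inequality. The only cosmetic differences are that the paper restricts the upper neighbourhoods to half-sized subsets of $[m]$ and counts unordered $\sqrt n$-element subsets of $\mathcal{S}$ against a factor $n!$, neither of which affects the bound.
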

\begin{proof}
For simplicity, we will assume $m=n-\sqrt{n}$ is an even integer. We next define a family $\mathcal{P}\subseteq \binom{2^{[n]}}{n}$ of set families. A set family $P\subseteq 2^{[n]}$ is a member of $\mathcal{P}$ if it contains exactly $n$ subsets, $m$ of which are $\{1\},\dots,\{m\}$ and the remaining $n-m=\sqrt{n}$ are of the form $\{m+i\}\cup T_i$ for $i\in [\sqrt{n}]$ and $T_i\in \binom{[m]}{m/2}$. We call the sets $\{1\},\dots,\{m\}$ the \textit{bottom elements} and the remaining sets the \textit{top elements} of $P$.

Each element $P\in \mathcal{P}$ gives a (labelled) poset $(P,\subseteq )$ on $n$ elements. 
Note that
\[
|\mathcal{P}|\geq 
\binom{m}{m/2}^{\sqrt{n}}=
2^{(1-o(1))n\sqrt{n}}.
\]
Let $\mathcal{S}\subseteq 2^{[n]}$ be given such that for each $P\in \mathcal{P}$, there is an order $v_1,\dots,v_n$ on the elements of $P$ such that $v_j'=\{i\in [n]\mid v_i\cleq_P v_j\}\in \mathcal{S}$ for all $j\in [n]$. 
In particular, for each $P\in \mathcal{P}$, we may record the $\sqrt{n}$ images $v_{i_1}',\dots,v_{i_{\sqrt{n}}}'$ of the top elements $v_{i_1},\dots,v_{i_{\sqrt{n}}}$ of $P$. Using a double counting argument, we will show that 
\[
\binom{|\cS|}{\sqrt{n}} \geq |\mathcal{P}|/n!=2^{(1-o(1))n\sqrt{n}}
\]
which will imply that $|\cS|\geq 2^{(1-o(1))n}$, as desired.

To prove the inequality above, we show that for every choice of $\sqrt{n}$ elements $S_1,\dots,S_{\sqrt{n}}\in \mathcal{S}$, there are at most $n!$ choices of $P\in \mathcal{P}$ for which $S_1,\dots ,S_{\sqrt{n}}$ are the top elements of $P$.

Let $S_1,\dots,S_{\sqrt{n}}\in \mathcal{S}$ and let $\sigma:[n]\to[n]$ be a bijection. Suppose that $P\in \mathcal{P}$ is such that the fixed order $v_1,\dots,v_n$ on the elements of $P$ (provided by the assumed `embedding algorithm') has the property that $\{i\}=v_{\sigma(i)}$ for $i\in [m]$, $i \in v_{\sigma(i)}$ for $i \in \{m+1, \ldots, n\}$, and 
$(S_1,\dots,S_{\sqrt{n}})=(v_{\sigma(m+1)}',\dots,v_{\sigma(n)}')$. 
We prove that there is at most one such~$P$. The bottom elements of $P$ are always the same and its top elements are of the form  $v_{\sigma(m + i)}=\{m+i\}\cup T_i$ for $i\in [\sqrt{n}]$ and $T_i\in \binom{[m]}{m/2}$. Noting that the top elements are incomparable, for $i\in [\sqrt{n}]$ we have \[
S_i=v'_{\sigma(m+i)}=\{j\in [n]:v_j\subseteq v_{\sigma(m + i)}\}=\{\sigma(m + i)\}\cup\{\sigma(j) : j \in T_i\},
\]
where the last part follows from $v_{\sigma(j)} = \{j\}$ for $j \in [m]$. In particular, $T_i$ is determined uniquely by $S_i$ and $\sigma$.
\end{proof}

\paragraph{Acknowledgement}
We would like to thank Bal\'azs Patk\'os for pointing out a mistake in a correspondence we made between Problem~\ref{pb:uni_min_poset} and Problem~\ref{pb:uni_min_comparability} in an earlier version of this paper (which worked for adjacency labelling in comparability graphs and comparability labelling but did not preserve the poset structure). We also thank the anonymous referees for useful suggestions for improvement.

    \newcommand{\etalchar}[1]{$^{#1}$}

\end{document}